\def\beq{\arraycolsep1pt\begin{eqnarray*}}
\def\eeq{\end{eqnarray*}}
\newcommand{\br}{\mathbb{R}}
\newcommand{\C}{\mathbb{C}}
\newcommand{\bd}{\bar{\partial}}
\newtheorem{Thm}{Theorem}[section]
\newtheorem{lem}[Thm]{Lemma}
\newtheorem{pro}[Thm]{Proposition}
\newtheorem{de}[Thm]{Definition}
\newtheorem{re}[Thm]{Remark}
\newtheorem{ex}[Thm]{Example}
\spnewtheorem{thm}{Theorem}[section]{\bfseries}{\itshape}
\def\be{\begin{equation}}
\def\ee{\end{equation}}
\def\bea{\begin{eqnarray}}
\def\eea{\end{eqnarray}}
\numberwithin{equation}{section}
\renewenvironment{abstract}
 {\small
  \vspace{20pt}
  \list{}{
    \setlength{\leftmargin}{2cm}%
    \setlength{\rightmargin}{\leftmargin}%
  }%
  \item\relax}
 {\endlist}
\begin{document}
\newgeometry{
  a4paper,        
  textwidth=14.5cm,  
  textheight=20cm, 
  heightrounded,   
  hratio=1:1,      
  vratio=5:2,      
}

\title{Collapsing of Mean Curvature Flow of Hypersurfaces\\ 
to Complex Submanifolds
}

\author{Farnaz Ghanbari\inst{1} \and Samreena\inst{2}}

\institute{Tarbiat Modares University, Tehran, Iran and ICTP, Trieste, Italy \and SISSA, Trieste, Italy}

\maketitle
\thispagestyle{plain}

\begin{abstract}
\textbf{Abstract.}
In this paper, we produce explicit examples of mean curvature flow of $(2m-1)$-dimensional submanifolds which converge to $(2m-2)$-dimensional submanifolds at a finite time. These examples are a special class of hyperspheres in $\mathbb{C}^{m}$ with a $U(m)$-invariant K\"ahler metrics.  We first discuss the mean curvature flow problem and then investigate the type of singularities for them.

\end{abstract}

\section{Introduction}

\hspace{10pt} Mean curvature flow is a well-known geometric evolution equation for hypersurfaces in which each point moves with a velocity given by the mean curvature vector. If the hypersurface is compact, the short time existence and uniqueness of the mean curvature flow are well-known. In general, it is very hard to find an exact solution of mean curvature flow problem. In fact there are very few explicit examples. Round spheres in Euclidean space are non trivial examples of evolving hypersurface  under mean curvature flow which concentrically shrink inward until they collapse at a finite time to a single point. Another instance would be the marriage ring that under mean curvature flow shrinks to a circle. A round cylinder also remains round and finally converges to a line. Mean curvature flow develops singularities if the second fundamental forms of the time dependent immersions become unbounded.
 It is well-known that mean curvature flow of any closed manifold in Euclidean space develops singularities at a finite time.

 The mean curvature ﬂow has ﬁrst been investigated by Brakke in  1978 \cite{Brakke}. Later Huisken \cite{Huisken: 1984} showed that any closed convex hypersurface in Euclidean space shrinks to a round point at a finite time. He then proved \cite{Huisken:1986} that the same holds for hypersurfaces in general Riemannian manifolds satisfying a strong convexity condition which takes into account the geometry of the ambient space. Brakke used geometric measure theory,
but Huisken employed a more classical diﬀerential geometric approach. In order to describe singularities of the ﬂow, Osher-Sethian introduced a level-set formulation for the mean curvature ﬂow which was investigated later by Evans-Spruck (\cite{Evans},  \cite{Evans1},  \cite{Evans2},  \cite{Evans3}) and Chen-Giga-Goto \cite{Chen} in details. Ilmanen revealed in \cite{Ilmanen} the relation between the level-set formulation and the geometric measure theory approach.

In this paper, we consider a class of canonical hyperspheres in $\mathbb{C}^{m}$.
We will make an important assumption about the symmetry group. Namely, we will require that the K\"ahler metric on $\C^m\setminus {0}$ has $U(m)$ as
the group of isometries. We study the mean curvature flow problem for hyperspheres in $Bl_{0}\C^m$  which reduce to an ordinary differential equation due to invariance of the metric and mean curvature under isometries. In general, it is not easy to compute the second fundamental form to investigate the singularities and also their types. We have computed all the principal curvatures and observed that near the exceptional divisor, all the principal curvatures vanish except for one direction which goes to infinity. By knowing the principal curvatures, we 
\newpage
\restoregeometry

\hspace{-.37cm}can compute the mean curvature and also the square of the norm of the second fundamental form.  There is a known example, Burns metric on $Bl_{0}\C^2$, for which we will study the mean curvature flow problem  in Section \ref{sec:5} and show the exact time of singularity.

The  rest of the paper is organized as follows. Section \ref{sec:2} is devoted to definitions, some well-known theorems and some results that will be used throughout the work. Section \ref{sec:3} focuses on the blow up of $\C^m$ at the origin. We discuss the condition when a $U(m)$-invariant metric on $\C^m\setminus {0}$ can extend to the blow up of $\C^m$ at the origin. 
In Section \ref{sec:4}, we state and prove a proposition on computing principal curvatures on special cases that leads to the proof of our main theorem. Finally Section \ref{sec:5} is dedicated to the mean curvature flow for our setting and some examples.

\section{Preliminaries}
\label{sec:2}

\hspace{10pt} In this section, we recall some basic notions and fix some notations used throughout this paper.

\begin{de}
Let $F_{0}: \Sigma^{m} \longrightarrow M^{m+1} $ be a smooth immersion of an $m$-dimensional manifold. The mean curvature flow of $F_{0}$ is a family of smooth immersions $F_{t}: \Sigma \longrightarrow M^{m+1}  $ for $t\in [0,T)$ such that setting $F(p,t)=F_{t}(p)$ the map 
$ F: \Sigma \times [0,T) : \Sigma^{m} \longrightarrow M^{m+1} $ is a smooth solution of the following system of PDE's

\begin{equation*}\label{eq:in:introduction 1}
     \begin{cases}
\frac{\partial}{\partial{t}} F(p,t)=H(p,t)n(p,t),\\
F(p,0)=F_{0}(p) \,,
\end{cases} 
\end{equation*}
where $H(p,t)$ and $n(p,t)$ are respectively the mean curvature and the unit normal of the hypersurface $F_{t}$ at the point $p\in \Sigma$.
\end{de}

Usually the Riemannian manifold $M$ is called the ambient manifold and the parameter t is considered as time. Minimal submanifolds, i.e. submanifolds with zero mean curvature
everywhere, are the stationary solutions of this flow.  

There are two important propositions in the Euclidean case. We use them in the proof of our main theorem on mean curvature flow problem \cite{Mantegazza}. The propositions are as follows.
 
\begin{pro}
\label{Pro:1}
 If the second fundamental form is bounded in the interval $[0,T)$ with $T< +\infty$, then all its covariant derivatives are also bounded.
\end{pro}
\medskip

\begin{pro}
\label{Pro:2}
    If the second fundamental form is bounded in the interval $[0,T)$ with $T< +\infty$, then $T$ cannot be a singular time for the mean curvature flow of a compact hypersurface $F: \Sigma \times[0,T) \longrightarrow \mathbb{R}^{n+1}$.
\end{pro}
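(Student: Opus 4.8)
The plan is to show that, under the stated hypothesis, the flow extends smoothly beyond $T$, so that $T$ is a regular rather than a singular time. The point of departure is Proposition~\ref{Pro:1}: since the second fundamental form $A$ is bounded on $[0,T)$, every covariant derivative $\nabla^{k}A$ is also uniformly bounded there. I would first control the intrinsic geometry through the evolution of the induced metric, which satisfies $\partial_{t}g_{ij}=-2Hh_{ij}$ (the precise sign being immaterial for the estimate). Since $H=g^{ij}h_{ij}$ and $|A|$ are bounded, the right-hand side is bounded, so integrating in time on the finite interval $[0,T)$ shows that the metrics $g(t)$ are all uniformly equivalent to $g(0)$; in particular they neither blow up nor degenerate as $t\to T$.

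Next I would establish convergence of the immersions. The velocity obeys $|\partial_{t}F|=|H|\le\sqrt{n}\,|A|$, which is bounded, so for each $p\in\Sigma$ the curve $t\mapsto F(p,t)$ is uniformly Lipschitz and hence has a limit $F_{T}(p)$ as $t\to T$; by compactness of $\Sigma$ the convergence is uniform. To promote this to smooth convergence I would fix a finite atlas of charts on $\Sigma$ and bound, in each chart, all spatial derivatives $\partial^{\alpha}F$ uniformly on $[0,T)$. This is where Proposition~\ref{Pro:1} is used: the Christoffel symbols of $g(t)$ and their derivatives are controlled by the (uniformly equivalent) metric together with the bounds on $\nabla^{k}A$, while $A$ itself records the normal part of the second derivatives of $F$. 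A bootstrap then bounds $\partial^{\alpha}F$ and $\partial_{t}\partial^{\alpha}F$ for every multi-index $\alpha$, giving $F_{t}\to F_{T}$ in $C^{\infty}$ and hence smoothness of $F_{T}$.

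It then remains to verify that $F_{T}$ is an immersion, which is immediate from the first step: the limit metric $g(T)=\lim_{t\to T}g(t)$ is positive definite, so $dF_{T}$ has full rank and $F_{T}\colon\Sigma\to\mathbb{R}^{n+1}$ is a smooth compact immersion. Finally I would invoke the short-time existence and uniqueness theorem for mean curvature flow with the smooth initial datum $F_{T}$: this yields a smooth solution on $[T,T+\varepsilon)$ for some $\varepsilon>0$, which matches the original flow at $T$ and so extends it to a smooth mean curvature flow on $[0,T+\varepsilon)$. Along this extension the second fundamental form stays bounded across $T$, so $T$ cannot be a singular time, which is the assertion.

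The main obstacle is the smoothness of the limit $F_{T}$, i.e. the bootstrap of the second paragraph: one must convert the intrinsic bounds on $\nabla^{k}A$ into bounds on the extrinsic derivatives $\partial^{\alpha}F$ in fixed charts, all the while tracking the time-dependence of the metric and the Christoffel symbols. The uniform equivalence of the metrics proved at the outset is exactly what makes this bootstrap close, which is why I would establish it first.
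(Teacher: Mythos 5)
The paper offers no proof of this proposition at all: it is stated as a known Euclidean result and simply cited from Mantegazza's lecture notes \cite{Mantegazza}, so there is no in-paper argument to compare against. Your proposal is correct and is essentially the canonical proof from that reference --- uniform equivalence of the metrics $g(t)$ via the evolution $\partial_t g_{ij} = -2Hh_{ij}$, uniform convergence of the immersions from the bounded velocity $|\partial_t F| = |H| \le \sqrt{n}\,|A|$, promotion to $C^{\infty}$ convergence using the bounds on $\nabla^{k}A$ from Proposition \ref{Pro:1} (where, as you note, the key technical point is bounding the Christoffel symbols, which follows by integrating their time derivative over the finite interval), non-degeneracy of the limit metric to ensure $F_T$ is an immersion, and finally restarting the flow from $F_T$ by short-time existence, with uniqueness gluing the two solutions into a smooth flow on $[0, T+\varepsilon)$, contradicting singularity at $T$.
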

\medskip

From these two propositions, we have the following Remark.
\medskip

\begin{re}
    The above estimate can be found independent of $T$ and also independent of initial data.
\end{re}
\medskip

One of the most important problems in studying the mean curvature flow is to understand the possible singularities the flow goes through. We introduce the notion of singularity in mean curvature flow and their types in the following.
\medskip
\begin{de}
    If the second fundamental form $|A|^2$ blows up at $t\longrightarrow T$, then we call $T$ a singular time of the flow.
\end{de}
\medskip

\begin{de}
 We say that the flow is developing a type $I$ singularity  at time $T$ if there exists a constant $C>1$ such that we have the upper bound 
    $$ max_{p\in \Sigma}|A(p,t)|^2 \leq \frac{C}{T-t}.$$
    Otherwise, we say it is a type $II$ singularity.
    
\end{de}

\section{K\"ahler Metrics on
the Blow Up of $\C^m$ at the Origin}
\label{sec:3}

\hspace{10pt} We consider the blow up of  $\C^m$ at the origin and denote it by $Bl_0\C^m$. It is defined as following:
$$Bl_0\C^m = \{((z_1,z_2,\dots,z_m),[t_1,t_2,\dots,t_{m}]) \in \C^m \times \C P^{m-1} : z_it_j-z_jt_i=0\} \subset\C^m \times \C P^{m-1}\,.$$

There is a natural projection map $\pi_1 : Bl_0\C^m \rightarrow{\C^m}$ defined by
\begin{equation*}
    \pi_1((z_1,z_2,\dots,z_m),[t_1,t_2,\dots,z_{m}])=(z_1,z_2,\dots,z_m)\,.
\end{equation*}

The inverse image $\pi^{-1}_1(p)$ of   $p \in \C^{m}$  is a line passing the point $p$.

The \textbf{exceptional divisor} $E$ is defined as the inverse image of the origin i.e., $\pi^{-1}(0)= \C P^{m-1}.$

Moreover the map $\pi_{1}$ can be restricted to a biholomorphism
\begin{equation*}
    \pi_1:  Bl_0\C^m \setminus E \rightarrow{ \C^m\setminus 0}.
\end{equation*}

A system of charts that covers the exceptional divisor is given as follows: for every $i=1,2,\dots,m$,
$$U_{i} = \{((z_1,z_2,\dots,z_m),[t_1,t_2,\dots,z_{m}]): t_i\not=0, z_j=z_it_j \}\,.$$

The coordinate map $     \Phi_i:U_{i} \rightarrow{}\C^m $
is defined as
\begin{equation*}
    ((z_1,z_2,\dots,z_m),[t_1,t_2,\dots,t_{m}])\to \left(z_i, \frac{t_1}{t_i},\dots,\frac{t_{i-1}}{t_i},\frac{t_{i+1}}{t_i},\dots,\frac{t_m}{t_i}\right),
\end{equation*}
 with inverse map $ \Phi_{i}^{-1}:\C^m\rightarrow{U_{i}}$
 \begin{equation}\label{coordinate}
     (z_1,z_2,\dots,z_m)\to ((z_1z_{i},z_{i} z_{2},\dots,z_{i},\dots,z_{i}z_{m}),[z_{1},\dots,z_{i-1},1,z_{i+1},\dots,z_{m}]).
 \end{equation}
 
For every $i=1,2,\dots,m,$ the chart $U_{i}$ intersects the exceptional divisor $E$:
 $$ E \cap U_{i}=\{ z_{i}=0\}\,. $$
 
We now take the smooth $(1,1)$-form on $\C^m\setminus 0$ given by 
$$\omega = \sqrt{-1} \partial \bd \log (S),$$
where $S= \sum^m_{i=1}|z_i|^2$.

The pull back of the smooth form 
$ \omega = \sqrt{-1} \partial \bd \log (S)$ on $\C^m\setminus{0}$ extends to the Fubini Study metric on the exceptional divisor $E=\C P^{m-1}$.

The pull back $\pi^{*}_{1}\omega$ is given in local coordinates (\ref{coordinate}) by,
\begin{align} \label{ metric on E}
    \pi^{*}_{1}\omega &=\partial \bd \log(|z_i|^2(|z_{1}^2|+|z_2|^2\dots+|z_{i-1}|^2+1+|z_{i+1}|^2+\dots+|z_m|^2)\nonumber\\ 
    &= \partial \bd \log(|z_{1}|^2+|z_2|^2\dots+|z_{i-1}|^2+1+|z_{i+1}|^2+\dots+|z_m|^2).
\end{align}

Clearly (\ref{ metric on E}) is the Fubini Study metric on the exceptional divisor $E$ in homogeneous coordinates $[z_{1},\dots,z_{i-1},1,z_{i+1},\dots,z_{m}].$

 Let $g:\C^m\to \br$ be a smooth function that depends on $S=\sum^m_{i=1}|z_i|^2$. Then the smooth form 
\begin{equation}\label{K\"ahler: form}
    \omega =\sqrt{-1}\partial \bd f(S)= \sqrt{-1}\partial \bd( \log S+g(S))\
\end{equation}
gives K\"ahler metric on $\C^m\setminus \{0\}$ if and only if $\frac{1}{S} +g_{S}>0$ and $ g_{S}+Sg_{SS}>0$. The next proposition explains 
when the K\"ahler form \eqref{K\"ahler: form} on $\C^m\setminus{0}$ can be extended to $Bl_0\C^m$.
\medskip

\begin{pro}\label{form :extend to blow up}
The smooth form $\omega =\sqrt{-1} \partial \bd( \log S+g(S))$ on $\C^m\setminus \{0\}$ extends to K\"ahler metric on $Bl_0\C^m$  if and only if $g_{S}(0)>0$, $\frac{1}{S} +g_{S}>0$ and $ g_{S}+Sg_{SS}>0$. 
\end{pro}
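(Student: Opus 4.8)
The plan is to verify the extension and positivity chart by chart on $Bl_0\C^m$, using the coordinate charts $U_i$ introduced above. Fix $i$ and write the chart coordinates as $w=z_i$ together with $\xi_j=t_j/t_i$ for $j\ne i$, so that on $U_i\setminus E$ one has $z_i=w$ and $z_j=w\xi_j$, hence
\[
S=|w|^2\rho,\qquad \rho:=1+\sum_{j\ne i}|\xi_j|^2,
\]
and the exceptional divisor is $E\cap U_i=\{w=0\}$. Away from $E$, i.e. on $\C^m\setminus\{0\}$, the form is already Kähler precisely under the two conditions $\tfrac1S+g_S>0$ and $g_S+Sg_{SS}>0$, since in the $z$-coordinates $\omega$ has components $f_S\,\delta_{\alpha\beta}+f_{SS}\,\bar z_\alpha z_\beta$ whose eigenvalues are $f_S=\tfrac1S+g_S$ (with multiplicity $m-1$) and $f_S+Sf_{SS}=g_S+Sg_{SS}$ (the radial direction); these account for the second and third inequalities. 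It therefore remains to analyze the behaviour across $\{w=0\}$.

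First I would split $\log S=\log|w|^2+\log\rho$. On $U_i\setminus E$ the term $\log|w|^2$ is pluriharmonic, so $\sqrt{-1}\partial\bd\log S=\sqrt{-1}\partial\bd\log\rho$ there, and consequently
\[
\pi_1^{*}\omega=\sqrt{-1}\,\partial\bd\big(\log\rho+g(|w|^2\rho)\big)\quad\text{on }U_i\setminus E .
\]
The right-hand side is $\sqrt{-1}\partial\bd$ of a function that is smooth on all of $U_i$ (both $\log\rho$ and $g(|w|^2\rho)$ are smooth, since $g$ is smooth and $|w|^2\rho$ is a polynomial in the chart coordinates), and this is exactly the candidate smooth extension of the metric across $E$.

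The heart of the argument is to evaluate this extended $(1,1)$-form at $w=0$. Differentiating the potential $\log\rho+g(|w|^2\rho)$ and using $S=|w|^2\rho\to0$ as $w\to0$, I expect every mixed derivative involving a base direction $\xi$ to carry a factor of $|w|^2$ and hence to vanish on $\{w=0\}$, while $\sqrt{-1}\partial\bd\log\rho$ contributes the Fubini–Study metric in the $\xi$-directions; the only surviving fibre term is
\[
\partial_w\partial_{\bar w}\big(\log\rho+g(|w|^2\rho)\big)\big|_{w=0}=g_S(0)\,\rho .
\]
Thus at each point of $E$ the extended form is block-diagonal: the positive-definite Fubini–Study metric on the base $\C P^{m-1}$ together with the single fibre entry $g_S(0)\rho$. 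Since $\rho\ge1>0$, positive-definiteness along $E$ holds if and only if $g_S(0)>0$.

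Combining the two regimes gives the equivalence in both directions. If the three inequalities hold, the form is smooth across $E$, positive-definite on $U_i\setminus E$ by the second and third, and positive-definite on $E$ by the first, hence Kähler on $Bl_0\C^m$; conversely a Kähler extension forces positivity on the complement (the second and third inequalities) and positivity of the fibre entry $g_S(0)\rho$ on $E$ (the first). The main obstacle I anticipate is the local computation at $w=0$: one must check carefully that the pluriharmonic piece $\log|w|^2$ genuinely drops out of the smooth extension, and that all angular and cross derivatives of $g(|w|^2\rho)$ vanish at the divisor, so that positivity there collapses exactly to the single scalar condition $g_S(0)>0$.
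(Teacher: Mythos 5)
Your proposal is correct, and it ends where the paper's proof ends --- a chart computation showing that along $E$ the form is block-diagonal, Fubini--Study on the base plus the single fibre entry $g_S(0)\rho$, so positivity along $E$ is exactly $g_S(0)>0$ --- but your route is organized differently and is genuinely more general. The paper proves only the case $m=2$: it pulls $\omega$ back through the coordinate map on $U_1$ and $U_2$, writes out the full $2\times 2$ Hermitian matrix of $\pi_1^*\omega$ (whose $(1,1)$ entry $(1+|z_2|^2)(g_S+Sg_{SS})$ is precisely your $(g_S+Sg_{SS})\rho$), restricts to $\{z_1=0\}$, reads off positivity iff $g_S(0)>0$, and asserts that ``the general case follows from the same argument.'' You instead work at the level of the potential: splitting $\log S=\log|w|^2+\log\rho$ and discarding the pluriharmonic term $\log|w|^2$ exhibits $\pi_1^*\omega$ on $U_i\setminus E$ as $\sqrt{-1}\,\partial\bd\bigl(\log\rho+g(|w|^2\rho)\bigr)$ with a potential smooth across $E$, which settles the existence of the smooth extension for all $m$ at once and makes transparent the point the paper leaves implicit when it jumps from $m=2$ to general $m$. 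The derivative computations you flag as the remaining obstacle do check out: $\partial_w\partial_{\bar w}$ of the potential equals $(g_S+Sg_{SS})\rho$, hence $g_S(0)\rho$ at $w=0$, and every base--base or mixed second derivative of $g(|w|^2\rho)$ carries a factor of $|w|^2$, $w$ or $\bar w$ (note the mixed terms $\partial_{\xi_j}\partial_{\bar w}$ vanish at $w=0$ with only a factor of $w$, not $|w|^2$ as you wrote, which is all you need). What the paper's explicit matrix buys is the form of $\pi_1^*\omega$ in a neighbourhood of $E$, not just on it, which it reuses later; what your potential-splitting buys is a uniform-in-$m$ argument and a structural reason the extension exists, namely pluriharmonicity of $\log|w|^2$. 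Your identification of the eigenvalues $f_S$ (multiplicity $m-1$) and $f_S+Sf_{SS}=g_S+Sg_{SS}$ off $E$, and the converse direction, are also correct.
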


\begin{proof}
For the sake of simplicity, we only prove the case when $m=2.$ The general case follows from the same argument.

Given the projection map
$$ \pi_{1} : Bl_0\C^2\rightarrow{}\C^2 ,$$
on the chart $U_{1}$ we have $S=|z_{1}|^2(1+|z_{2}|^2)$ and $E \cap U_{1}=\{z_1=0 \}\,.$
The pull back of the K\"ahler metric  (\ref{K\"ahler: form}) to $Bl_0\C^2$ is given in coordinates (\ref{coordinate}) by
\[
\pi_{1}^*\omega= 
\begin{bmatrix}(1 + |z_2|^2)(g_{S}+ S g_{SS})& z_1	\bar{z_2}(g_{S}+ S g_{S S})\\
z_2	\bar{z_1}(g_{S}+ S g_{SS}) & |z_1|^2 (g_{S} +  |z_1|^2|z_2|^2 g_{SS})+ \frac{1}{1+|z_2|^2}
\end{bmatrix}.
\]

The  restriction of  $\pi_{1}^*\omega$ to the exceptional divisor $E$ is:
\[
\pi_{1}^*\omega|_{E} = 
\begin{bmatrix}(1 + |z_2|^2)g_{S}(0)& 0\\
0& \frac{1}{1+|z_2|^2}
\end{bmatrix}.
\] 

Clearly $\pi_{1}^*\omega|_{E}$ is positive definite if and only if $g_{S}(0)>0$.

In the same way on  $U_{2}$, the pull back  $\pi_{1}^*\omega$ where
\[
\pi_{1}^*\omega = 
\begin{bmatrix}\frac{1}{1+|z_1|^2} + |z_2|^2 (g_{S} + |z_1|^2|z_2|^2g_{SS})& z_1	\bar{z_2}(g_{S}+ S g_{SS})\\
z_2	\bar{z_1}(g_{S}+ S g_{SS})& (1+|z_1|^2 )(g_{S}+ S g_{SS})
\end{bmatrix}
\] 
 can be restricted to the exceptional divisor as follows:
\[
\pi_{1}^*\omega|_{E} = 
\begin{bmatrix}\frac{1}{1+|z_1|^2} & 0\\
0& (1+|z_1|^2 )g_{S}
\end{bmatrix}
\] 
$\pi_{1}^*\omega|_{E} $ is positive definite if and only if $g_{S}(0) > 0 $. 
\end{proof}\qed
\medskip

\begin{re}
    If $g_{S}(0)=0$, then $\pi_{1}^*\omega|_{E}$ defines a metric only along the exceptional divisor. Therefore the condition $g_{S}(0)\neq 0$ guarantees the non degeneracy of the metric orthogonal to the exceptional divisor. The other two conditions $\frac{1}{S} +g_{S}>0$ and $ g_{S}+Sg_{SS}>0$ are considered because $\omega$ must be a K\"ahler metric on $\C^m\setminus{0.}$
\end{re}

\section{Principal Curvatures of Hyperspheres}
\label{sec:4}
\hspace{10pt} In this section, we compute the second fundamental form for hyperspheres under special conditions. In order to investigate the mean curvature flow for our examples, we need to know the principal curvatures which are the eigenvalues of the second fundamental form.

Let $\Sigma$ be an $d$-dimensional smooth submanifold in an $d+1$-dimensional manifold $M$ and $g$ be the Riemannian metric on $M$ with Levi Civita connection $\nabla$.
\medskip 
\begin{de}
The second fundamental form of $\Sigma$ is defined by 
\begin{equation}\label{eq:coefficient :of:2nd form}
    \Pi_{n}(X,X)=g\left(\nabla_{X}(X),n\right)\,,
\end{equation}
where $X \in T_{p}M$ and $n \in (T_{p}\Sigma)^{\perp}.$ 
\end{de}
\medskip 

\begin{lem} \label{2nd fundamental form}
    Suppose $X$ and $n$ are local vector fields on M such that 
    \begin{itemize}
        \item[1.] $||X||^2_{g}$ and $||n||^2_{g}$ are constants\,,
        \item[2.] for all $p \in \Sigma$, $X(p) \in T_{p}\Sigma$ and $n(p) \in (T_{p}\Sigma)^{\perp}$,
    \end{itemize}
    then $$\Pi_{n}(X,X)=-g([X,n],X).$$
\end{lem}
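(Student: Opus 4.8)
The plan is to unwind the definition $\Pi_n(X,X)=g(\nabla_X X,n)$ using the two defining properties of the Levi--Civita connection---metric compatibility and vanishing torsion---together with the two hypotheses. No curvature or second-variation machinery is needed; this is a direct manipulation.

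First I would exploit hypothesis 2. Since $X(q)\in T_q\Sigma$ and $n(q)\in(T_q\Sigma)^{\perp}$ for every $q\in\Sigma$, the function $g(X,n)$ vanishes identically on $\Sigma$. Because $X$ is tangent to $\Sigma$ along $\Sigma$, differentiating this identically-zero function in the direction $X$ gives $X\big(g(X,n)\big)=0$ at every point of $\Sigma$. Metric compatibility then yields
$$0=X\big(g(X,n)\big)=g(\nabla_X X,n)+g(X,\nabla_X n),$$
so that $\Pi_n(X,X)=g(\nabla_X X,n)=-g(X,\nabla_X n)$.

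Next I would convert $\nabla_X n$ into a bracket term using the torsion-free identity $\nabla_X n-\nabla_n X=[X,n]$. Substituting $\nabla_X n=\nabla_n X+[X,n]$ gives
$$\Pi_n(X,X)=-g(X,\nabla_n X)-g(X,[X,n]).$$
It remains to kill the first term, and here I invoke hypothesis 1: since $\|X\|_g^2=g(X,X)$ is constant, applying metric compatibility in the direction $n$ gives $0=n\big(g(X,X)\big)=2\,g(\nabla_n X,X)$, whence $g(X,\nabla_n X)=0$. Combining and using the symmetry of $g$, we obtain $\Pi_n(X,X)=-g([X,n],X)$, which is the claim.

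The computation is short, so the only point genuinely requiring care is the justification that $X\big(g(X,n)\big)=0$, which is where hypothesis 2 does its real work. It is not enough that $X$ and $n$ be tangent/normal at the single point $p$; one needs $g(X,n)$ to vanish along a whole curve through $p$ tangent to $X$, and this is supplied precisely by the fact that $X$ is everywhere tangent to $\Sigma$, so its integral curves through points of $\Sigma$ remain in $\Sigma$, where $g(X,n)\equiv0$. I would make this step explicit rather than present it as a formal cancellation. I note in passing that constancy of $\|n\|_g^2$ is not actually used in establishing this particular identity.
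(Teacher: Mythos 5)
Your proof is correct and follows essentially the same route as the paper's: metric compatibility to get $\Pi_n(X,X)=-g(\nabla_X n,X)$, the torsion-free identity to replace $\nabla_X n$ by $[X,n]+\nabla_n X$, and constancy of $\|X\|_g^2$ to kill the term $g(\nabla_n X,X)$. Your only additions are improvements in rigor the paper leaves implicit---justifying $X\bigl(g(X,n)\bigr)=0$ via tangency of $X$ to $\Sigma$, and correctly observing that constancy of $\|n\|_g^2$ is never used.
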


\begin{proof}
    \begin{align*}
        \Pi_{n}(X,X)& =g\left(\nabla_{X}(X),\eta\right) = -g(\nabla_{X}(n),X )
        = -g([X,n]+\nabla_n(X),X)\\
        &= -g([X,n],X) +\frac{1}{2} n(||X||_{g}) =-g([X,n],X).
    \end{align*}
\end{proof}\qed

In the next proposition, we state and prove the main result of this section and calculate the second fundamental from for hyperspheres with some particular assumptions.
\medskip
\begin{pro} \label{2nd fundamental form 1}
    Suppose that $g_{0}$ and $g$ are Euclidean and Riemannian 
    metrics on M respectively. Let $e_{1},...,e_{d+1}$ be orthonormal local vector fields for $M$ with respect to $g_{0}$ i.e., $g_{0}(e_{i},e_{j})=\delta_{ij}$. $\Sigma \subset M$ is an $m$-dimensional submanifold such that for each $p \in \Sigma$ we have $e_{d+1}(p) \perp  T_{p}\Sigma $. Let $n=e_{d+1}$ and $A,\eta,\mu $ be local functions on $M$ such that their restrictions on $\Sigma$ are constants. We have the following conditions:
   
    \begin{itemize}
        \item[1.]  $g(e_{d+1}, e_{d+1}) = A^{2}$ , $g(e_{d}, e_{d})= \mu^{2}$, 
        
        \item[2.] $g(e_{i},e_{i})=\eta^{2}$ \quad if $1 \leq i \leq d-1$
       \item[3.] $g(e_{i},e_{j})=0$  \quad    $\forall i \neq j$ 
       \item[4.] $[e_{d},e_{d+1}] \in  \mathbb{R}<e_{d},e_{d+1}>.$
       
    \end{itemize}
Now if $\Pi_{\Sigma}(g_{0})=\tau g_{0}$ for some $\tau \in \mathbb{R}$,  then
    
    \[ \Pi_{g}(e_{i},e_{j})=
\begin{bmatrix} (\eta^{2}A^{-1}\tau+ \eta A^{-1}\nabla_{n}\eta)I_{m-1}& 0\\
0 & \mu^{2}A^{-1}\tau + \mu A^{-1}\nabla_{n}\mu
\end{bmatrix}.
\]
\end{pro}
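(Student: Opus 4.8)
The plan is to evaluate the bilinear form $\Pi_g(e_i,e_j)=g(\nabla_{e_i}e_j,n)$ entrywise in the frame $\{e_1,\dots,e_{d+1}\}$ and to convert every connection term into Euclidean data through the umbilicity hypothesis $\Pi_\Sigma(g_0)=\tau g_0$. First I would fix the normalization: by condition~3 we have $g(e_k,e_{d+1})=0$ for $k\le d$, so $n=e_{d+1}$ is $g$-orthogonal to $T_p\Sigma$, while condition~1 gives $\|n\|_g=A$, constant along $\Sigma$. Thus the $g$-unit normal is $A^{-1}e_{d+1}$ and the shape operator is $A^{-1}\Pi_g(\cdot,\cdot)$, which accounts for the overall factor $A^{-1}$ in the stated matrix. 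A second preliminary observation carries most of the bookkeeping: since $g$ and $g_0$ are diagonal in the \emph{same} frame, writing $[e_a,e_b]=\sum_c c^c_{ab}\,e_c$ gives $g([e_a,e_b],e_c)=\|e_c\|_g^2\,g_0([e_a,e_b],e_c)$, i.e. a $g$-bracket pairing differs from the $g_0$-one only by the scaling factor of the slot in which it is paired.

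For the diagonal entries I would repeat the computation of Lemma~\ref{2nd fundamental form} with $X=e_i$ and $n=e_{d+1}$, but retain the term $\tfrac12 n(\|X\|_g^2)$ that was dropped there: in the present setting $\|e_i\|_g^2=\eta^2$ (resp. $\|e_d\|_g^2=\mu^2$) is constant only \emph{along} $\Sigma$ and not in the normal direction, so this term survives and equals $\eta\,\nabla_n\eta$ (resp. $\mu\,\nabla_n\mu$). This gives $\Pi_g(e_i,e_i)=-g([e_i,n],e_i)+\eta\,\nabla_n\eta$. Applying Lemma~\ref{2nd fundamental form} to the Euclidean metric $g_0$, whose frame is genuinely orthonormal so that the extra term vanishes, together with $\Pi_\Sigma(g_0)=\tau g_0$, yields $g_0([e_k,n],e_k)=-\tau$; the scaling identity then gives $g([e_i,n],e_i)=-\eta^2\tau$ and $g([e_d,n],e_d)=-\mu^2\tau$. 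Substituting produces the two diagonal blocks $\eta^2\tau+\eta\,\nabla_n\eta$ and $\mu^2\tau+\mu\,\nabla_n\mu$, and dividing by $A$ matches the statement.

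The remaining, and I expect hardest, point is the vanishing of the off-diagonal entries, since it uses all the remaining hypotheses simultaneously. Using the Koszul formula for $g$ with $Z=n=e_{d+1}$ together with $g(e_i,e_j)=0$ for $i\ne j$, each off-diagonal entry reduces, via the scaling identity, to a combination of $g_0$-brackets. Because $e_1,\dots,e_d$ are tangent to $\Sigma$ along $\Sigma$, their Lie brackets stay tangent, so $g_0([e_i,e_j],n)=0$; feeding this into the polarized umbilicity identity $g_0([e_i,e_j],n)-g_0([e_i,n],e_j)-g_0([e_j,n],e_i)=2\tau\delta_{ij}$ forces $g_0([e_i,n],e_j)+g_0([e_j,n],e_i)=0$ and hence $\Pi_g(e_i,e_j)=0$ for $i,j\le d-1$. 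For the mixed entries $\Pi_g(e_i,e_d)$, condition~4 gives $[e_d,n]\in\mathrm{span}(e_d,n)$, which kills $g_0([e_d,n],e_i)$ directly, and the same polarized identity then forces $g_0([e_i,n],e_d)=0$, so these vanish too. Assembling the diagonal blocks with the vanishing off-diagonal terms and the global factor $A^{-1}$ yields the asserted matrix.

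The main obstacle is precisely this off-diagonal step: one must combine the \emph{full} tensor identity $\Pi_\Sigma(g_0)=\tau g_0$ (not merely its diagonal), the tangency of the frame $e_1,\dots,e_d$, and the bracket condition~4 for the distinguished direction $e_d$. The other place demanding care is the diagonal computation, where the surviving normal-derivative term $\tfrac12 n(\|e_i\|_g^2)$ — exactly the term that vanished in Lemma~\ref{2nd fundamental form} under global constancy — is what produces the $\nabla_n\eta$ and $\nabla_n\mu$ contributions.
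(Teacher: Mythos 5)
Your proposal is correct and follows essentially the same route as the paper: expressing $\Pi_g$ through Lie brackets with $n$ (the paper via its Lemma~\ref{2nd fundamental form} applied to the $g$-orthonormal rescaled frame $\{\eta^{-1}e_i,\mu^{-1}e_d,A^{-1}n\}$, you equivalently by retaining the normal-derivative term $\tfrac12 n(\|e_i\|_g^2)$ for the unscaled frame), transferring the umbilicity data $g_0([e_i,n],e_j)+g_0([e_j,n],e_i)=-2\tau\delta_{ij}$ through the common diagonalizing frame, and killing the mixed $(e_i,e_d)$ entries by combining this antisymmetry with condition~4, exactly as in the paper's Step~3. The only differences are bookkeeping (Koszul formula versus direct bracket expansion, where the $\nabla_n\eta$, $\nabla_n\mu$ terms appear), not substance.
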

\begin{proof}
We fix some notations which will be used in the proof.

Let $[n,e_i]= \sum_{j=1}^{d+1} a_{ij}e_{j}$ for $1\leq i\leq d$. Then,
\begin{itemize}
    \item[$\bullet$] $a_{ii}= g_{0}([n,e_{i}],e_{i})=  -g_{0}([e_i,n],e_{i}) = \Pi_{g_{0}}(e_{1},e_{1}) = \tau$ 
    \item[$\bullet$] $ 0=2\Pi_{g_{0}}(e_i,e_{j}) =g_{0}  \nonumber([e_{i},n],e_{j})+g_{0}([e_{j},n],e_{i}) =a_{ij}+a_{ji}.$
     
\end{itemize}
\medskip

Notice that $\{\eta^{-1}e_{1},...,\eta^{-1}e_{d-1} ,\mu^{-1}e_{d},A^{-1}n \}$ is an orthonormal frame for the metric $g$. We prove the proposition in the following steps.
\medskip

\textbf{Step 1}: By the Lemma \ref{2nd fundamental form} we have
\begin{align*}
    \Pi(\eta^{-1}e_{1},\eta^{-1}e_{1})&=-g([\eta^{-1}e_{1}, A^{-1}n],\eta^{-1}e_{1} )\\
    &= -g(\eta^{-1} A^{-1}[e_1,n]-A^{-1}\nabla_{n}(\eta^{-1}e_1), \eta^{-1}e_{1} )\\
    &= -\eta^{-1} A^{-1}g([e_1,n],  \eta^{-1}e_{1} ) + \eta^{-1}A^{-1}\nabla_{n}(\eta^{-1}) g(e_1, e_{1} )\\
    &=-\eta^{-2} A^{-1}g([e_1,n],  e_{1} ) + A^{-1} \eta \nabla_{n}(\eta^{-1})\\
    &= -\eta^{-2} A^{-1}g([e_1,n],  e_{1} ) - A^{-1} \eta^{-1} \nabla_{n}(\eta)\\
    &=- A^{-1}(\tau + \eta^{-1}\nabla _{n}\eta)
\end{align*}
where we employed the property of Lie bracket and the fact that $\nabla_{e_{1}}A^{-1}=0$ on $\Sigma$. For the last step we use the following relation:
$$g([e_1,n],  e_{1})= a_{11} g_{11}=\tau \eta^2.$$

\textbf{Step 2}: The same calculation shows that 

$$\Pi(\eta^{-1}e_{i},\eta^{-1}e_{i}) = A^{-1}(\tau + \eta^{-1}\nabla _{n}\eta),$$ 
if $1 \leq i \leq d-1$,

$$\Pi(\mu^{-1}e_{d},\mu^{-1}e_{d}) = g([\mu^{-1} e_{d}, A^{-1}n], \mu^{-1}e_{d}).$$

Similar to the step 1 we have:

$$\Pi(\mu^{-1}e_{d},\mu^{-1}e_{d}) = A^{-1}(\tau + \mu^{-1}\nabla _{n}\mu).$$

Now similar to the last calculations we get $\Pi(e_{i},e_{j})=0$ for each $1 \leq i < j\leq d-1$.

In the next step we show that $\Pi (\eta^{-1}e_{1},\mu^{-1}e_{d})=0$.
\smallskip

\textbf{Step 3}:
\[
2\Pi(\eta^{-1}e_{1}, \mu^{-1}e_{d})= g([\eta ^{-1}e_{1},n], \mu^{-1}e_{d})+ g([\mu^{-1}e_{d},n], \eta^{-1}e_{1})
\]

We have: $$[\eta^{-1}e_{1},n]= \eta^{-1}[e_{1},n]- \nabla_{n}\eta^{-1}e_{1}=
\eta^{-1}\Sigma a_{ij}e_{j} - \nabla_{n}\eta^{-1}e_{1},$$

and $$[\mu^{-1}e_{d},n]= \mu^{-1}[e_{d},n]- \nabla_{n}\mu^{-1}e_{d}=\mu^{-1}\Sigma a_{dj}e_{j} - \nabla_{n}\mu^{-1}e_{d}.$$
\begin{align*}
  2\Pi(\eta^{-1}e_{1}, \mu^{-1}e_{d})&= \mu^{-1}\eta^{-1}g([e_{1},n],e_{d})- \mu^{-1}\nabla_{n}\eta^{-1}g(e_{1},e_{d})
+\mu^{-1}\eta^{-1}g([e_{d},n],e_{1})\\
&\hspace{.4cm}- \eta^{-1}\nabla_{n}\mu^{-1}g(e_{d},e_{1})\\
&=\mu^{-1}\eta^{-1}g([e_{1},n],e_{d})+ g([e_{d},n],e_{1}))\\
&=\mu^{-1}\eta^{-1} (\Sigma a_{ij}e_{j},e_{d})+ g(\Sigma a_{dj}e_{j},e_{1}))\\
&=\mu^{-1}\eta^{-1}(a_{1d} g(e_{d},e_{d})+ a_{d1}g(e_{1},e_{1}))\\
&= \mu^{-1}\eta^{-1}a_{d1}(-g(e_{d},e_{d})+ g(e_{1},e_{1})).
\end{align*}

Since $[e_{d},n] \in span<e_{d},n>$, then $a_{d1}=...=a_{d-1}=0$. We thus conclude that
 $$\Pi (\eta^{-1}e_{1},\mu^{-1}e_{d})=0.$$
\end{proof}\qed

We consider $ \mathbb{C}^{m}\setminus{0}\,$ with K\"ahler 
 metric $g= \partial \overline {\partial} f(S)=(f_{S}\delta_{ij}+f_{SS}\bar{z_{i}}z_{j})dz_{i} \wedge d\bar{z_{j}}$, $\Sigma=\{(z_1,z_2,\dots,z_m) \in \C^m: S=R^2=|z_1|^2+ |z_2|^2+\dots+|z_m|^2 \} \subset \C^m$, the normal vector $n$ and $J(n)=in$ and moreover an orthonormal basis $e_{1},...,e_{2m-2}$ for $<n,J(n)>^{\perp}$. Let $e_{2m-1}=J(n)$ and $e_{2m}=n$ , the metric g is written by:
\[
\begin{bmatrix} f_{S}I_{2m-2}& 0\\
0 & (f_{S}+f_{SS}S)I_{2}
\end{bmatrix}.
\]
\medskip

    \begin{Thm}
    
        The principal curvatures of the family $\Sigma_{S}^{2m-1} \subset\mathbb{C}^{m}\setminus{0}$  with a $U(m)$-invariant K\"ahler metric $\omega = \sqrt{-1} \partial \overline {\partial} f(S)$  are as follows:
    
  \[
    \lambda_{1}=  \lambda_{2}=...= \lambda_{2m-2} =- \frac{ \sqrt{f_{S}+f_{SS}S}}{f_{S}\sqrt{S}}, \hspace{.1cm} \lambda_{2m-1}=- \frac{f_{S}+ 3Sf_{SS}+ S^{2}f_{SSS}}{(f_{S}+f_{SS}S)^{\frac{3}{2}} \sqrt{S}}.
 \] 
    
 where $S=\Sigma_{i=1}^{m}|z_{i}|^2 $.
 
\end{Thm}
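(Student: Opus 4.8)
The plan is to obtain the result as a direct application of Proposition \ref{2nd fundamental form 1} with $d+1 = 2m$, so that $\Sigma_S^{2m-1}$ is the $(2m-1)$-dimensional submanifold and $g_0$ is the flat metric of $\C^m = \R^{2m}$. First I would fix the $g_0$-orthonormal frame $\{e_1,\dots,e_{2m-2},\, e_{2m-1} = J(n),\, e_{2m} = n\}$, where $n$ is the Euclidean unit radial field normal to the round sphere $\{S = R^2\}$; here $J(n)$ is $g_0$-orthogonal to $n$ and tangent to $\Sigma_S$, and $e_1,\dots,e_{2m-2}$ span the $g_0$-orthogonal complement of $\langle n, J(n)\rangle$. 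Reading off the displayed matrix of $g$ in this frame identifies the functions of Proposition \ref{2nd fundamental form 1} as $\eta = \sqrt{f_S}$ (for the $2m-2$ directions in $\langle n,J(n)\rangle^{\perp}$) and $\mu = A = \sqrt{f_S + S f_{SS}}$ (for the directions $J(n)$ and $n$). Since each of these depends only on $S$, it is constant on every $\Sigma_S$, and the $U(m)$-invariance delivers the diagonal structure of $g$; this verifies hypotheses 1--3.

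Next I would check the two geometric hypotheses. For the round Euclidean sphere of radius $R = \sqrt S$ one has $\Pi_\Sigma(g_0) = \tau g_0$ with $|\tau| = 1/\sqrt S$, which supplies the constant $\tau$. The crucial point is hypothesis 4, the bracket condition $[e_{2m-1}, e_{2m}] = [J(n), n] \in \R\langle J(n), n\rangle$. I would verify it by writing $n = V/r$ and $J(n) = W/r$ with $r = \sqrt S$, where $V = \sum_i(x_i\partial_{x_i} + y_i\partial_{y_i})$ is the Euler (dilation) field and $W = \sum_i(x_i\partial_{y_i} - y_i\partial_{x_i})$ is the generator of the diagonal $U(1)$-rotation, using $J\partial_{x_i} = \partial_{y_i}$, $J\partial_{y_i} = -\partial_{x_i}$. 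Dilations and rotations commute, so $[V,W] = 0$, and with $V(r) = r$, $W(r) = 0$ the Leibniz rule for brackets of rescaled fields gives $[J(n),n] = \tfrac{1}{r}J(n) \in \R\langle J(n), n\rangle$. This is the step I expect to require the most care, since it is precisely where the complex/Kähler structure enters and makes Proposition \ref{2nd fundamental form 1} applicable.

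With all hypotheses in place, Proposition \ref{2nd fundamental form 1} computes the second fundamental form of $\Sigma_S$ for $g$; because the frame $\{\eta^{-1}e_1,\dots,\eta^{-1}e_{2m-2},\, \mu^{-1}e_{2m-1},\, A^{-1}n\}$ is $g$-orthonormal and diagonalizes it, its diagonal entries are exactly the principal curvatures. It then remains to evaluate the normal derivatives in the formula. Using $n(S) = 2\sqrt S$ and the chain rule, $\nabla_n\eta = 2\sqrt S\,\eta_S$ and $\nabla_n\mu = 2\sqrt S\,\mu_S$ with $\eta = \sqrt{f_S}$ and $\mu = \sqrt{f_S + S f_{SS}}$. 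Substituting $\eta^{-1}\nabla_n\eta = \sqrt S\, f_{SS}/f_S$ and combining with $\tau$ and $A^{-1} = (f_S + S f_{SS})^{-1/2}$, the quantity $-A^{-1}(\tau + \eta^{-1}\nabla_n\eta)$ collapses to $-\sqrt{f_S + S f_{SS}}/(f_S\sqrt S)$, giving $\lambda_1 = \cdots = \lambda_{2m-2}$. For the remaining direction, $2\mu\mu_S = 2f_{SS} + S f_{SSS}$, so $\tau + \mu^{-1}\nabla_n\mu = (f_S + 3S f_{SS} + S^2 f_{SSS})/(\sqrt S\,(f_S + S f_{SS}))$, and multiplying by $-A^{-1}$ yields the stated $\lambda_{2m-1}$. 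The only genuinely delicate bookkeeping is to fix the orientation (sign of $\tau$) consistently with the conventions of Lemma \ref{2nd fundamental form} and Proposition \ref{2nd fundamental form 1}, and to track the appearance of the third derivative $f_{SSS}$, which arises solely from differentiating $\mu$ in the single direction $J(n)$.
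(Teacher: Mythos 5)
Your proposal is correct and takes essentially the same route as the paper: the paper's proof likewise applies Proposition \ref{2nd fundamental form 1} with $\Sigma=S^{2m-1}(r)$, $\eta^{2}=f_{S}$, $\mu^{2}=A^{2}=f_{S}+Sf_{SS}$, $\tau=1/\sqrt{S}$ and $n(S)=2\sqrt{S}$, and then reads off the principal curvatures from $g^{-1}\Pi(g)$. If anything you are more careful than the paper, since you explicitly verify the bracket hypothesis $[J(n),n]=\tfrac{1}{r}J(n)\in\mathbb{R}\langle J(n),n\rangle$ (which the paper leaves implicit) and your value $\eta^{-1}\nabla_{n}\eta=\sqrt{S}\,f_{SS}/f_{S}$ is the one consistent with the stated curvatures, whereas the paper's printed $\eta^{-1}\nabla_{n}\eta=\frac{S}{f_{S}}f_{SS}$ is a typographical slip.
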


\begin{proof}
In the setting of the Proposition \ref{2nd fundamental form 1}, we have $\Sigma= S^{2m-1}(r)$ and $M=\mathbb{C}^{m}\setminus{0}$. Furthermore, we have $A^2=\mu^2=f_{S}+f_{SS}S$ and $\eta^{2}=f_{S}$. Additionally we get
$\eta^{-1}\nabla_{n}\eta= \frac{S}{f_{S}}f_{SS}$ and $\mu^{-1}\nabla_{n}\mu= \frac{\sqrt{S}}{\mu^{2}}(2f_{SS}+ Sf_{SSS})$.

 Now by computing $g^{-1}\Pi(g)$, we obtain the following principal curvatures:
\[
\lambda_{1}=...=\lambda_{2m-2}=- \frac{\sqrt{f_{S}+Sf_{SS}}}{f_{S}\sqrt{S}}, \hspace{.1cm}\lambda_{2m-1}=- \frac{(f_{S}+3Sf_{SS}+S^{2}f_{SSS})}{(f_{S}+f_{SS}S)^{\frac{3}{2}} \sqrt{S}}.
\]
    
\end{proof}\qed

\section{Mean Curvature Flow}
\label{sec:5}
\hspace{10pt} In this section, we prove our main theorem presenting the mean curvature flow with initial data given by a special class of hyperspheres in $\mathbb{C}^{m}$ with a $U(m)$-invariant K\"ahler metrics. To do so, we first compute the mean curvature which is the sum of the eigenvalues of the second fundamental form.
\medskip

\begin{Thm}\label{Thm :mean :curvature}
The mean curvature of the family $\Sigma_{S}^{2m-1} \subset \mathbb{C}^{m}\setminus{0}$  with $U(m)$-invariant K\"ahler metric $\omega=\sqrt{-1}\partial \bd f(S)$ is given as follows:
\begin{equation*}
     H(S)=\frac{-1}{(2m-1)(f_{S}+Sf_{SS})^{\frac{3}{2}}\sqrt{S}f_{S}}((2m-2)(f_{S}+Sf_{SS})^2+f_{S}(f_{SSS}S^2+3Sf_{SS}+f_{S})).
\end{equation*}
\end{Thm}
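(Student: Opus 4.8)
The plan is to read off $H(S)$ directly from the principal curvatures established in the previous theorem, since the mean curvature here is the normalized trace of the shape operator, i.e. the arithmetic mean of the $2m-1$ principal curvatures $\lambda_1,\dots,\lambda_{2m-1}$. Because those eigenvalues split into the $(2m-2)$-fold value $\lambda_1$ and the single value $\lambda_{2m-1}$, the first step is simply
\[
H(S) = \frac{1}{2m-1}\Big((2m-2)\lambda_1 + \lambda_{2m-1}\Big),
\]
so that no new geometric input is needed and everything reduces to simplifying a linear combination of two known expressions.

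Next I would substitute
\[
\lambda_1 = -\frac{\sqrt{f_S + Sf_{SS}}}{f_S\sqrt{S}}, \qquad
\lambda_{2m-1} = -\frac{f_S + 3Sf_{SS} + S^2 f_{SSS}}{(f_S + f_{SS}S)^{3/2}\sqrt{S}},
\]
and put both terms over the common denominator $(f_S + Sf_{SS})^{3/2}\sqrt{S}f_S$. The only point requiring attention is the bookkeeping of the half-integer powers of $f_S + Sf_{SS}$: multiplying numerator and denominator of $\lambda_1$ by $(f_S + Sf_{SS})^{3/2}$ turns its numerator $\sqrt{f_S + Sf_{SS}}$ into $(f_S + Sf_{SS})^2$ and supplies the missing factor in the denominator, giving the contribution $-(2m-2)(f_S + Sf_{SS})^2$; likewise multiplying numerator and denominator of the $\lambda_{2m-1}$ term by $f_S$ gives the contribution $-f_S(f_{SSS}S^2 + 3Sf_{SS} + f_S)$.

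Collecting the two numerators over the shared denominator and restoring the overall factor $1/(2m-1)$ yields
\[
H(S) = \frac{-1}{(2m-1)(f_S + Sf_{SS})^{3/2}\sqrt{S}f_S}\Big((2m-2)(f_S + Sf_{SS})^2 + f_S(f_{SSS}S^2 + 3Sf_{SS} + f_S)\Big),
\]
which is the asserted identity. I do not expect a genuine obstacle: the statement is an immediate corollary of the principal-curvature computation, and the whole proof is the controlled manipulation of the exponents $\tfrac12$ and $\tfrac32$. The one item worth verifying is the normalization convention—whether $H$ denotes the sum or the mean of the $\lambda_i$—since the factor $1/(2m-1)$ in the target formula shows that the arithmetic mean is intended, and the common denominator must be assembled consistently with that choice.
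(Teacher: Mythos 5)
Your proposal is correct and takes essentially the same route as the paper: the paper gives no separate proof of this theorem, treating it exactly as you do, as an immediate consequence of the principal-curvature theorem via $H=\frac{1}{2m-1}\bigl((2m-2)\lambda_1+\lambda_{2m-1}\bigr)$, and your handling of the half-integer powers of $f_S+Sf_{SS}$ checks out. You also correctly resolved the one genuine ambiguity: although the paper's prose says ``sum of the eigenvalues,'' the factor $\frac{1}{2m-1}$ in the stated formula (confirmed by the Burns-metric example with $m=2$) shows the arithmetic mean is intended.
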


In the following two lemmas, we compute the square of the second fundamental form to see whether the mean curvature flow contains singularity or not.
\medskip

\begin{lem}
  Let $A$ be the second fundamental form of the family of $\Sigma_{S}^{2m-1} \subset  \mathbb{C}^{m} \setminus {0} $  with $U(m)$-invariant K\"ahler metric $\omega = \sqrt{-1}  \partial \overline {\partial} f(S)$.  Then the square of its norm, $|A|^{2}$ is as follows:
\[
\frac{(2n-2)(f_{S}+f_{SS}S)^{4}+ f_{S}^{2} (f_{S}+ 3Sf_{SS}+ S^{2}f_{SSS})^{2}}{f_{S}^{2}  (f_{S}+f_{SS}S)^{3} S}
\]  
\end{lem}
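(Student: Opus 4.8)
The plan is to compute $|A|^2$ directly from the principal curvatures already obtained in the main Theorem, since the square of the norm of the second fundamental form is the sum of the squares of the principal curvatures. Concretely, I would write
\[
|A|^2 = \sum_{i=1}^{2m-1}\lambda_i^2 = (2m-2)\lambda_1^2 + \lambda_{2m-1}^2,
\]
exploiting that the first $2m-2$ principal curvatures coincide and only $\lambda_{2m-1}$ is distinct. This reduces the lemma to substituting the explicit expressions
\[
\lambda_1 = -\frac{\sqrt{f_S + S f_{SS}}}{f_S\sqrt{S}}, \qquad
\lambda_{2m-1} = -\frac{f_S + 3 S f_{SS} + S^2 f_{SSS}}{(f_S + S f_{SS})^{3/2}\sqrt{S}},
\]
and combining the two squared terms over a common denominator.

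First I would square each curvature separately. Squaring $\lambda_1$ gives $\frac{f_S + S f_{SS}}{f_S^2\, S}$, and squaring $\lambda_{2m-1}$ gives $\frac{(f_S + 3 S f_{SS} + S^2 f_{SSS})^2}{(f_S + S f_{SS})^3\, S}$. Next I would bring both terms to the common denominator $f_S^2 (f_S + S f_{SS})^3 S$. The first term becomes $\frac{(2m-2)(f_S + S f_{SS})^4}{f_S^2 (f_S + S f_{SS})^3 S}$ after multiplying numerator and denominator by $(f_S + S f_{SS})^2$ and accounting for the multiplicity $2m-2$; the second term becomes $\frac{f_S^2 (f_S + 3 S f_{SS} + S^2 f_{SSS})^2}{f_S^2 (f_S + S f_{SS})^3 S}$. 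Adding these yields exactly the stated formula (noting that the $(2n-2)$ appearing in the statement should read $(2m-2)$, a typo).

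The computation is essentially mechanical once the principal curvatures are in hand, so there is no genuine obstacle; the only care needed is bookkeeping the exponents of $(f_S + S f_{SS})$ and $f_S$ when clearing denominators, and confirming that the $U(m)$-invariance guarantees that the second fundamental form is diagonal in the chosen orthonormal frame so that $|A|^2$ is indeed just the sum of squares of eigenvalues with no off-diagonal contributions. This diagonality is already established in Proposition~\ref{2nd fundamental form 1}, whose block-diagonal form for $\Pi_g$ carries over to $g^{-1}\Pi(g)$, so the decomposition $|A|^2 = (2m-2)\lambda_1^2 + \lambda_{2m-1}^2$ is justified and the remainder is algebra.
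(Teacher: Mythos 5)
Your proposal is correct and follows essentially the same route as the paper: the authors likewise take the principal curvatures from the preceding theorem and compute $|A|^{2}=(2m-2)\lambda_{1}^{2}+\lambda_{2m-1}^{2}$, clearing denominators to obtain the stated formula (whose $(2n-2)$ is indeed a typo for $(2m-2)$). Your added remark justifying the diagonality of the second fundamental form via the block-diagonal form in Proposition~\ref{2nd fundamental form 1} is a small point the paper leaves implicit, but it does not change the argument.
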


\begin{proof}
     The principal curvatures for the hyperspheres are:
 
\[
    \lambda_{1}=  \lambda_{2}=...= \lambda_{2m-2} =- \frac{ \sqrt{f_{S}+f_{SS}S}}{f_{S}\sqrt{S}}, \hspace{.1cm} \lambda_{2m-1}=- \frac{f_{S}+ 3Sf_{SS}+ S^{2}f_{SSS}}{(f_{S}+f_{SS}S)^{\frac{3}{2}} \sqrt{S}}.
\] 

Now we can compute $|A|^{2}$ as following:

\medskip
\hspace{1.9cm}$|A|^{2}  =  \lambda_{1}^{2}+ \lambda_{2}^{2}+ ...+ \lambda_{2m-1}^{2} = (2m-2)\lambda_{1}^{2}+ \lambda_{2m-1}^{2}$
 
$$ =  \frac{(2m-2)(f_{S}+f_{SS}S)^{4}+ f_{S}^{2} (f_{S}+ 3Sf_{SS}+ S^{2}f_{SSS})^{2}}{f_{S}^{2}  (f_{S}+f_{SS}S)^{3} S}\,.$$
\end{proof}\qed

\begin{lem}For each $g$ with the following conditions,
$$
g_{S}(0) > 0,\hspace{.1cm}\frac{1}{S} + g_{S} > 0,\hspace{.1cm} \text{and } g_{S} +Sg_{SS} > 0,
$$

 $ |A|^{2} $ blows up only at $S=0$.
 
 \end{lem}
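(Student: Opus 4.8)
The plan is to substitute the decomposition $f(S) = \log S + g(S)$ into the explicit expression for $|A|^2$ from the previous lemma and to track exactly which factors can degenerate. First I would record
$$f_S = \frac{1}{S} + g_S, \qquad f_{SS} = -\frac{1}{S^2} + g_{SS}, \qquad f_{SSS} = \frac{2}{S^3} + g_{SSS},$$
and then evaluate the only two combinations of these that actually occur in the formula. The key observation is that in both of them the singular $1/S$ contributions cancel:
$$f_S + Sf_{SS} = g_S + Sg_{SS}, \qquad f_S + 3Sf_{SS} + S^2 f_{SSS} = g_S + 3Sg_{SS} + S^2 g_{SSS}.$$
Both right-hand sides are smooth on $[0,\infty)$ and tend to $g_S(0) > 0$ as $S \to 0^+$, by the first hypothesis.

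Next I would establish finiteness for $S > 0$. The denominator of $|A|^2$ is $f_S^2\,(f_S + Sf_{SS})^3\,S$. For $S > 0$ the second hypothesis gives $f_S = \tfrac{1}{S} + g_S > 0$, the third gives $f_S + Sf_{SS} = g_S + Sg_{SS} > 0$, and $S > 0$ by assumption; hence the denominator is strictly positive. Since the numerator is a polynomial combination of the smooth functions $f_S, f_{SS}, f_{SSS}$, it is finite at every such point. Therefore $|A|^2$ is finite at every point with $S > 0$, that is, everywhere on $\C^m \setminus \{0\}$, so no interior blow-up can occur.

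Finally I would extract the asymptotics as $S \to 0^+$ to exhibit genuine blow-up at the origin. By the cancellations above, the two special combinations stay bounded away from zero near $S = 0$, while $f_S \sim 1/S \to \infty$. Substituting the leading orders $f_S^2 \sim 1/S^2$, $(f_S + Sf_{SS})^4 \to g_S(0)^4$, and $(f_S + 3Sf_{SS} + S^2 f_{SSS})^2 \to g_S(0)^2$, the first numerator term $(2m-2)(f_S + Sf_{SS})^4$ remains bounded, whereas the second term grows like $g_S(0)^2/S^2$ and thus dominates the numerator; the denominator grows like $g_S(0)^3/S$. Hence
$$|A|^2 \sim \frac{g_S(0)^2/S^2}{g_S(0)^3/S} = \frac{1}{g_S(0)\,S} \longrightarrow +\infty,$$
so $S = 0$ is the only point of blow-up.

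I expect the main obstacle to be purely a matter of bookkeeping: confirming that the singular parts cancel \emph{exactly} in the two relevant combinations, and then isolating the correct leading power of $S$ in numerator and denominator, since several competing terms of different orders are present. Once that cancellation is secured, positivity of the denominator for $S > 0$ is immediate from the three stated conditions, and the divergence at $S = 0$ follows from the single surviving factor of $1/S$.
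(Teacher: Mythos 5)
Your proposal is correct and takes essentially the same route as the paper: substitute $f = \log S + g$ into the formula for $|A|^2$, observe the exact cancellation of the singular parts in $f_S + Sf_{SS} = g_S + Sg_{SS}$ and $f_S + 3Sf_{SS} + S^2 f_{SSS} = g_S + 3Sg_{SS} + S^2 g_{SSS}$, use the three hypotheses to keep the denominator strictly positive for $S > 0$, and conclude divergence precisely at $S = 0$. Your explicit leading-order asymptotics $|A|^2 \sim 1/(g_S(0)\,S)$ in fact spell out in detail what the paper's terse justification (``the numerator is always positive by the above conditions'') leaves implicit.
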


\begin{proof} 
\[
|A|^{2} = \frac{(2n-2)(g_{S}+g_{SS}S)^{4}+ (\frac{1}{S}+g_{S})^{2} (g_{S}+ 3Sg_{SS}+ S^{2}g_{SSS})^{2}}{(\frac{1}{S}+g_{S})^{2}  (g_{S}+g_{SS}S)^{3} S}
\]

We know that $g$ is a smooth function and does not blow up. When $S=0$, the numerator is always positive by the above conditions of $g$. Thus the singularity only happens when $S=0.$ \qed
\end{proof}

Now we prove the main result of our work in the next theorem in which we investigate the mean curvature flow for our setting.
\medskip

\begin{Thm}
Consider $\mathbb{C}^{m} \setminus {0}$ with a $U(m)$-invariant K\"ahler metric $\omega = \sqrt{-1} \partial \overline {\partial} f(S)$ where $f(S)= \log S + g(S)$ and $g$ is an analytic function with the following conditions: 
\[
g_{S}(0) > 0,\hspace{.1cm}\frac{1}{S} + g_{S} > 0,\hspace{.1cm} \text{and } g_{S} +Sg_{SS} > 0.
\]
There exists $\epsilon > 0$ such that if  $R_{o} < \epsilon$, we can choose one hypersphere with radius $R_{0}$ in such a way that the mean curvature flow with initial condition $\Sigma_{R(0)} = \Sigma_{R_{0}}$ converges to the exceptional divisor at a finite time and we have a singularity of Type I.
\end{Thm}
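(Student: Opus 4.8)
The plan is to use the $U(m)$-symmetry to collapse the flow to a scalar ODE for the radius, to analyze that ODE near the exceptional divisor using the curvature asymptotics obtained above, and to read off both the finite collapse time and the Type~I rate. First I would reduce to an ODE. The metric $\omega=\sqrt{-1}\,\partial\bd f(S)$ is $U(m)$-invariant and mean curvature flow is equivariant under ambient isometries, so by uniqueness the flow starting from the $U(m)$-invariant sphere $\Sigma_{R_0}$ stays $U(m)$-invariant; since the $U(m)$-orbits in $\C^m\setminus\{0\}$ are exactly the Euclidean spheres $\{|z|^2=S\}$, the evolving hypersurface is $\Sigma_t=\Sigma_{S(t)}$ for a single function $S(t)$. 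Writing $\partial_t F=Hn$ in the radial direction, and using that the $g$-unit normal is $A^{-1}$ times the Euclidean radial field with $A^2=f_S+Sf_{SS}=g_S+Sg_{SS}$, I obtain
\[
\frac{dS}{dt}=\frac{2\sqrt{S}}{\sqrt{g_S+Sg_{SS}}}\,\mathcal{H}(S),
\]
where $\mathcal{H}=\sum_i\lambda_i=(2m-2)\lambda_1+\lambda_{2m-1}$ is the flow speed (equal to $(2m-1)$ times the mean curvature computed above). By invariance $|A|^2$ is constant on each $\Sigma_S$, so $\max_p|A|^2(p,t)=|A|^2(S(t))$ and the whole problem becomes one-dimensional.

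Next I would compute the relevant asymptotics as $S\to0^+$. Substituting $f=\log S+g$, the singular $1/S$ terms cancel, giving $f_S+Sf_{SS}=g_S+Sg_{SS}$ and $f_S+3Sf_{SS}+S^2f_{SSS}=g_S+3Sg_{SS}+S^2g_{SSS}$, both tending to the positive limit $g_S(0)$. Hence $\lambda_1=\cdots=\lambda_{2m-2}=O(\sqrt{S})\to0$ while $\lambda_{2m-1}\sim-\bigl(g_S(0)\,S\bigr)^{-1/2}\to-\infty$, so $\mathcal{H}<0$ for small $S$ and the sphere shrinks toward $E$. The decisive cancellation is
\[
\frac{2\sqrt{S}}{\sqrt{g_S+Sg_{SS}}}\,\lambda_{2m-1}=-\frac{2\,(g_S+3Sg_{SS}+S^2g_{SSS})}{(g_S+Sg_{SS})^2}\xrightarrow[S\to0]{}-\frac{2}{g_S(0)},
\]
while the $\lambda_1$-term contributes $-2(2m-2)S/(1+Sg_S)\to0$; thus $dS/dt$ extends continuously to $S=0$ with the finite negative value $-2/g_S(0)$. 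In the same limit the $|A|^2$ formula of the lemma gives $S\,|A|^2\to 1/g_S(0)$.

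Then I would conclude finite-time collapse. Since $dS/dt$ is continuous on $(0,\infty)$ and tends to $-2/g_S(0)<0$, I can pick $\epsilon>0$ so small that $dS/dt\le-\delta<0$ on $(0,\epsilon^2]$. For any $R_0<\epsilon$ and $S_0=R_0^2$ the solution $S(t)$ is strictly decreasing; by the lemma $|A|^2$ blows up only at $S=0$, so while $S$ stays bounded below the second fundamental form is bounded and Propositions~\ref{Pro:1}--\ref{Pro:2} keep the flow smooth and existing. The monotone ODE then forces $S\to0$ at the finite time
\[
T=\int_0^{S_0}\frac{dS}{-\,dS/dt}\le\frac{S_0}{\delta}<\infty,
\]
and as $S(t)\to0$ the lifts $\pi_1^{-1}(\Sigma_{S(t)})\subset Bl_0\C^m$ converge to the $(2m-2)$-dimensional exceptional divisor $E$, which is the asserted collapse.

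Finally I would establish the Type~I rate. From $dS/dt\to-2/g_S(0)$ and $S(T)=0$, the mean value theorem gives $S(t)/(T-t)\to 2/g_S(0)$ as $t\to T^-$; combining with $S\,|A|^2\to 1/g_S(0)$ yields
\[
(T-t)\,|A|^2(S(t))=\frac{T-t}{S(t)}\cdot S(t)\,|A|^2\longrightarrow\frac{g_S(0)}{2}\cdot\frac{1}{g_S(0)}=\frac12,
\]
so that $\max_p|A|^2\le C/(T-t)$ on $[0,T)$ for a suitable constant $C>1$, i.e.\ a Type~I singularity. The main obstacle is exactly this asymptotic matching: one must verify that the factor $\sqrt{S}$ relating the normal speed to $dS/dt$ precisely cancels the $S^{-1/2}$ blow-up of $\lambda_{2m-1}$, so that $dS/dt$ has a finite nonzero negative limit, and then transfer the $S\to0$ rates into $t\to T$ rates through the ODE. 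Justifying the equivariant reduction in the first step, so that $\max_p|A|^2$ is realized by a single round sphere, is the other point that needs care.
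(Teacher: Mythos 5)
Your proposal is correct and its conclusions match the theorem, but the two decisive steps are executed by a genuinely different and more elementary mechanism than the paper's. The paper keeps the radial variable $R$, uses analyticity of $g$ to write $dR/dt=H(R)=R^{-\alpha}K(R)$ with $K$ analytic and $|A|^{2}=W(R)/R^{2}$ with $W$ analytic, integrates the Taylor series of $K$ term by term to exhibit $T_{sing}$ as an explicit convergent series, and then gets the Type~I bound by expanding $W$ and applying L'Hopital separately to the $W^{0}(0)/R^{2}$ and $W^{1}(0)/R$ terms, using $R'(t)\to\infty$ and $\lim R(t)H(R(t))\neq 0$. You instead pass to $S=R^{2}$ and isolate the cancellation $\frac{2\sqrt{S}}{\sqrt{g_S+Sg_{SS}}}\,\lambda_{2m-1}\to -2/g_S(0)$, so that the reduced ODE velocity extends continuously to $S=0$ with a finite negative limit; finite collapse time then follows from a one-line integral bound instead of series manipulation, and the Type~I rate follows from the mean value theorem combined with $S\,|A|^{2}\to 1/g_S(0)$, which even pins down the sharp limit $(T-t)|A|^{2}\to \tfrac12$ (in your unnormalized-speed convention) where the paper only proves finiteness of the limsup. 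Two further points in your favor: your argument never uses analyticity of $g$ --- $C^{3}$ regularity near $S=0$ suffices, so you prove a slightly more general statement than the paper's series argument does --- and you correctly keep the factor $A^{-1}=(g_S+Sg_{SS})^{-1/2}$ converting the $g$-unit normal speed into $dR/dt$, whereas the paper writes $dR/dt=H(R)$ outright; this discrepancy is harmless for the qualitative conclusions since $A\to\sqrt{g_S(0)}>0$, but it changes the constants. The remaining ingredients --- the $U(m)$-equivariant reduction via uniqueness of the flow, the lemma that $|A|^{2}$ blows up only at $S=0$, and the appeal to Propositions \ref{Pro:1} and \ref{Pro:2} to keep the flow running until $S\to 0$ --- coincide with the paper's, including the same unaddressed caveat, shared by both arguments, that those propositions are stated for a Euclidean ambient space and are transferred to the Riemannian setting by a locality remark.
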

 \begin{proof}
 
 The mean curvature flow problem for the hyperspheres $\Sigma_{S}$  is the following ordinary differential equation (ODE):
 \[
 \frac{ dR(t)}{dt}= H(R(t)).
 \]
 
 We can choose $\epsilon > 0 $ such that if we start the flow with the initial data $R(0)=R_{0}<\epsilon$, the mean curvature does not vanish and is negative. In the previous lemma we observe that there is only one singularity at $R(t)=0$. Therefore, the time of singularity $(T_{sing})$ happens whenever $R(t)=0$.  This means that if the flow starts at $t=0$, then $ |A|^{2} $ is bounded for all $t \in [0,T_{sing})$.
 We can write the mean curvature flow problem as $\frac{ dR(t)}{dt}= \frac{1}{R^\alpha(t)}K(R(t))$ for some $\alpha >0$, where $K(R(t))$ is an analytic function without singularity and its Taylor series near $R(t)=0$ is as follows: $K(R(t)) = \sum_{n=0}^{\infty} \frac{K^{n}(0)}{n!} R^{n}(t)$. We thus get 
 $R^{\alpha} (t) \frac{ dR(t)}{dt}=\sum_{n=0}^{\infty} \frac{K^{n}(0)}{n!} R^{n}(t)$. By applying integral on both sides, we get $\frac{1}{\alpha+1} R^{\alpha+1}(t)= K(0)t + \sum_{n=1}^{\infty}  \frac{K^{n}(0)}{(n+1)!} R^{n+1}(t) + C$ for some constant C. Moreover, with initial condition $R(0)=R_{0}$ we have $C=\frac{R_{0}^{\alpha+1}}{\alpha+1} -  \sum_{n=1}^{\infty}  \frac{K^{n}(0)}{(n+1)!} R_{0}^{n+1}$. Further, we have the singularity only at $R(t)=0$. Hence
 $$T_{sing} = \frac{1}{K(0)} ( \sum_{n=1}^{\infty}  \frac{K^{n}(0)}{(n+1)!} R_{0}^{n+1} - \frac{R_{0}^{\alpha+1}}{\alpha+1}  ).$$
 We can easily conclude that the time of singularity is finite and we can employ the Propositions \ref{Pro:1} and \ref{Pro:2}. Since these Propositions are well-known local theorems, we can apply them in Riemannian case. Therefore, we can conclude that the flow does not stop (i.e., keep restarting) and converges to $R(t)=0$ which is the exceptional divisor in $Bl_{0}\mathbb{C}^{m}$. Moreover,  We can write the square of the second fundamental form  as $|A|^{2}=\frac{W(R(t))}{R^{2}(t)}$,  where $W(R(t))$ is an analytic function without singularity. Its Taylor series then near $R(t)=0$ is $W(R(t)) = \sum_{n=0}^{\infty} \frac{W^{n}(0)}{n!} R^{n}(t)$.
 Clearly we have
 \[
 |A|^{2}= \frac{W^{0}(0)}{R^{2}(t)}+ \frac{W^{1}(0)}{R(t)}+\frac{W^{2}(0)}{2}+ \sum_{n=3}^{\infty} \frac{W^{n}(0)}{n!} R^{n-2}(t).
 \]
 Therefore we get
  \[
\lim_{t \to\ T_{sing}}(T_{sing}-t)|A|^{2}= \lim_{t \to\ T_{sing}}(T_{sing}-t)\frac{W^{0}(0)}{R^{2}(t)}+\lim_{t \to\ T_{sing}}(T_{sing}-t)\frac{W^{1}(0)}{R(t)}
\]
$$
+\lim_{t \to\ T_{sing}}(T_{sing}-t)\frac{W^{2}(0)}{2}
+\lim_{t \to\ T_{sing}}(T_{sing}-t)\sum_{n=3}^{\infty} \frac{W^{n}(0)}{n!} R^{n-2}(t).
$$
$R(t)$ goes to zero as $t$ goes to $T_{sing}$, so we can easily check that 
 $$ \lim_{t \to\ T_{sing}}(T_{sing}-t)\frac{W^{2}(0)}{2}=\lim_{t \to\ T_{sing}}(T_{sing}-t)\sum_{n=3}^{\infty} \frac{W^{n}(0)}{n!} R^{n-2}(t)=0.$$
 Since $\frac{ dR(t)}{dt}= H(R(t))$, we have 
 $R^{\prime}(T_{sing})= \frac{ dR(t)}{dt}|_{t=T_{sing}}=H(R(T_{sing}))=H(0)=\infty.$
 By using Hopital method we can conclude that 
 $$\lim_{t \to\ T_{sing}}(T_{sing}-t)\frac{W^{1}(0)}{R(t)}=\lim_{t \to\ T_{sing}}\frac{-W^{1}(0)}{R^{\prime}(t)}<\infty .$$
We can also compute that $\lim_{t \to\ T_{sing}}R(t)H(R(t)) \neq 0$. Again by using Hopital method we can easily see that 
$$ \lim_{t \to\ T_{sing}}(T_{sing}-t)\frac{W^{0}(0)}{R^{2}(t)}< \infty.$$
 Consequently, 
 $$\lim_{t \to\ T_{sing}}(T_{sing}-t)max|A|^2 < \infty.$$ 
 The singularity is thus Type I. 

\end{proof}\qed

The assumption of analyticity in the above Theorem is not restrictive. Many interesting
Kähler metrics are analytic. For example, as proved by Hopf and Morrey constant scalar curvature
Kähler metrics satisfy this hyphothesis \cite{Morrey}.
\medskip
\begin{re}
 We can observe that when $S(t) \to 0$, then $\lambda_{1}=...=\lambda_{2m-2} \to 0 $ and $\lambda_{2m-1}\to \infty$. This means that when $S(t) \to 0$,  one of the principal directions collapses and the hypersphere converges to the exceptional divisor, which is holomorphic submanifold of  $Bl_{0}\mathbb{C}^{m}$. Since holomorphic submanifolds of complex manifolds are minimal, so one would naturally expect that the principal curvature vanishes there.
 
\end{re}

In some examples we can estimate $\epsilon$ as $+\infty$ including the Burns metric. Example \ref{eg:1} provides an instance of the mean curvature flow problem for the Burns metric.
\medskip

\begin{ex}
\label{eg:1}
    Consider $Bl_{0}\mathbb{C}^{2}$ with the Burns metric given by $\omega =\sqrt{-1}  
    \partial \overline {\partial} (\log(S)+ S)$.
    We can choose an arbitrary hypersphere $\Sigma_{R_{0}}$ as initial condition for the mean curvature flow. The mean curvature flow of the hypersphere converges to $S^{2}$ at a finite time and we have the singularity of Type I.
\end{ex}    
     
\begin{proof}

The mean curvature flow problem for the hyperspheres $\Sigma_{S}$  is the following ODE:

 \[
 \frac{ dR(t)}{dt}= H(R(t)).
 \]

\medskip
Now the principal curvatures of $\Sigma_{S}$ with Burns metric are:
\[
\lambda_{1}=\lambda_{2}= \frac{-R}{(R^{2}+1)}, \hspace{.2cm} \lambda_{3}=\frac{-1}{R}.
\]

\medskip
Moreover, the mean curvature of these families and $|A|^{2}$ are given by
\[
H(R(t))= \frac{-1}{3}\frac{3R^{2}(t)+1}{R(t)(R^{2}(t)+1)}, \hspace{.1cm}|A|^{2}= \frac{2R^{4}(t)+(R^{2}(t)+1)^{2}}{R^{2}(t)(R^{2}(t)+1)^{2}}.
\]

\medskip
Therefore the mean curvature problem is equivalent to:
\[
\frac{ dR(t)}{dt} = \frac{-1}{3}\frac{3R^{2}(t)+1}{R(t)(R^{2}(t)+1)}.
\]
\medskip
The solution of the equation with initial data $R(0)=R_{0}$ would be

\[
\frac{R^{2}(t)}{2} + \frac{1}{3} \log(3R^{2}(t)+1)=-t+c.
\]
\medskip
$|A|^{2}$ blows up only when $R(t)=0$. With the initial condition $R(0)=R_{0}$ we get the time of singularity as following:
\[
T_{sing}= \frac{R_{0}^{2}}{2} + \frac{1}{3} \log(3R_{0}^{2}+1).
\]

\medskip
The time of singularity is finite and the flow exists for all $t \in [0 , T_{sing})$. We can also check that there exists a positive constant $C$ such that $|A|^{2} < \frac{C}{|T_{sing}-t|}$. The singularity is thus Type I.

\end{proof}\qed

\section*{Acknowledgement}
We thank Professor Claudio Arezzo for many valuable discussions and comments about this work. His insightful feedback brought our work to a higher level. We are also greatly indebted with Professor Reza Seyyedali. He also contributed to improve our paper by kindly providing several comments on this paper.


\begin{itemize}
    \item Tarbiat Modares University Tehran and ICTP Trieste, farnazghanbari@modares.ac.ir
    \item SISSA Trieste, samreena01@gmail.com
\end{itemize}

\end{document}